\documentclass[10pt, twocolumn,twoside]{IEEEtran}
\usepackage[T1]{fontenc}

\IEEEoverridecommandlockouts                          

\usepackage{amsmath} 
\usepackage{amssymb}  
\usepackage{amsthm}
\usepackage{xcolor}
\usepackage{caption}
\usepackage{graphicx}
\usepackage{mathtools}
\usepackage{caption}
\usepackage{subcaption}
\usepackage{floatrow}
\usepackage{tikz}
\usepackage{url}

\usepackage{algorithmicx}
    \usepackage[ruled]{algorithm}
    \usepackage{algpseudocode}

\usepackage{pgfplots}
\usepackage{xcolor}
\usetikzlibrary{matrix,arrows,calc,positioning,shapes,decorations.pathreplacing}
\usepackage{graphicx}

\usepackage{amsmath} 

\usepackage{enumerate}
\usepackage[all,tips]{xy}
\SelectTips{cm}{11}

\newtheorem{theorem}{Theorem}

\newtheorem{definition}{Definition}
\newtheorem{remark}{Remark}
\newtheorem{proposition}{Proposition}
\newtheorem{lemma}{Lemma}
\newtheorem{example}{Example}

\newcommand{\R}{\mathbb{R}}

\newcommand{\dotx}{\dot{x}}
\newcommand{\doty}{\dot{y}}
\newcommand{\dotz}{\dot{z}}

\DeclareMathOperator{\spn}{span}

\hyphenation{op-tical net-works semi-conduc-tor}

\begin{document}
\title{On the Geometry of Virtual Nonlinear Nonholonomic Constraints}
\author{Efstratios Stratoglou, Alexandre Anahory Simoes, Anthony Bloch \IEEEmembership{Fellow, IEEE}, Leonardo J. Colombo. \IEEEmembership{Member, IEEE}
\thanks{E. Stratoglou is with Universidad Polit\'ecnica de Madrid (UPM), José Gutiérrez Abascal, 2, 28006 Madrid, Spain. (e-mail: ef.stratoglou@alumnos.upm.es).}
\thanks{A. Anahory Simoes is with the School of Science and Technology, IE University, Spain. (e-mail: alexandre.anahory@ie.edu).}
\thanks{A. Bloch is with Department of Mathematics, University of Michigan, Ann Arbor, MI 48109, USA. (e-mail: abloch@umich.edu)}
\thanks{L. Colombo is with Centre for Automation and Robotics (CSIC-UPM), Ctra. M300 Campo Real, Km 0,200, Arganda del Rey - 28500 Madrid, Spain. (e-mail: leonardo.colombo@csic.es)}
\thanks{The authors acknowledge financial support from Grant PID2022-137909NB-C21 funded by MCIN/AEI/ 10.13039/501100011033 and the LINC Global project from CSIC "Wildlife Monitoring Bots" INCGL20022. A.B. was partially supported by NSF grant  DMS-2103026, and AFOSR grants FA
9550-22-1-0215 and FA 9550-23-1-0400}}

\maketitle

\begin{abstract}
 Virtual constraints are relations imposed on a control system that become invariant via feedback control, as opposed to physical constraints acting on the system.  Nonholonomic systems are mechanical systems with non-integrable constraints on the velocities. In this work, we introduce the notion of \textit{virtual nonlinear nonholonomic constraints} in a geometric framework which is a controlled invariant submanifold and we show the existence and uniqueness of a control law preserving this submanifold. We illustrate the theory with various examples and present simulation results for an application.
\end{abstract}

\begin{IEEEkeywords}
Virtual constraints, Nonholonomic systems, Geometric control, feedback control, Nonlinear constraints.
\end{IEEEkeywords}

\section{Introduction}
\IEEEPARstart{V}irtual constraints are relations on the configuration variables of a control system which are imposed through feedback control and the action of actuators, instead of through physical connections such as gears or contact conditions with the environment. The advantage of working with virtual constraints is that they can be re-programmed instantaneously without any change to the connections of the links of a robot or its environment. As a consequence, one may achieve a desired prescribed motion by imposing virtual constraints.  Virtual constraints extend the application of zero dynamics to feedback design (see e.g., \cite{Isidori}, \cite{Westervelt et al 2018}). 

Virtual holonomic constraints have been studied over the past few years in a variety of contexts, such as motion planning and control \cite{Freidovich et al}, \cite{Shiriaev et al}, \cite{Mohammadi et al}, \cite{Westerberg et al} and biped
locomotion where it was used to achieve a desired walking gait \cite{Chevallereau et al}, \cite{Westervelt et al}. Virtual nonholonomic constraints are a class of virtual constraints that depend on velocities rather than only on the configurations of the system. Those virtual constraints were introduced in \cite{griffin2015nonholonomic} to design a velocity-based swing foot placement in bipedal robots. In particular, this class of virtual constraints was used in \cite{hamed2019nonholonomic,horn2020nonholonomic,horn2018hybrid,horn2021nonholonomic} to encode velocity-dependent stable walking gaits via momenta conjugate to the unactuated degrees of freedom of legged robots and prosthetic legs.   

The recent work \cite{moran2021energy} (see also \cite{moran2023}) introduces an approach to defining rigorously virtual nonholonomic constraints, but it is not set in the most appropriate geometric setting to study this kind of constraint: that of tangent bundles. In their study the authors make no distinction between making a constraint invariant under the closed-loop system or being stabilized by it. In our work, we only consider a constraint to be a virtual constraint when it is invariant under the controlled motion.

In the paper \cite{virtual}, we developed a geometric description of linear virtual nonholonomic constraints, i.e., constraints that are linear in the velocities, while in \cite{affine} we addressed the problem of affine virtual nonholomonic constraints, but the nonlinear case was not addressed because the nature of the constraints makes a thorough mathematical analysis difficult. In the present work, we extend the latest outcomes by laying the geometric foundations of virtual nonlinear nonholonomic constraints and studying their properties. We ensure the existence and uniqueness of a control law that makes the constraints invariant and we explore some consequences for the corresponding close-loop system. In addition, we check under which conditions the closed-loop dynamics coincides with the nonholonomic dynamics under the nonlinear constraints. Lastly, we give an explicit application of the theory to the motion of
particles moving with an alignment on the velocities and we test our results with numerical simulations.

The remainder of the paper is structured as follows. In Section II we present the necessary background for mechanical systems on Riemannian manifolds and recall the equations of motion for a nonlinear nonholonomic mechanical system. In Section III we give a geometric construction of virtual nonholonomic constraints. The main result of the paper is Theorem \ref{main:theorem}, where under some assumptions, we prove the existence and uniqueness of a control law making the constraints invariant under the closed-loop system. Additionally, we examine the geometric properties of the closed-loop dynamics and exemplify it in examples. Next, in Section IV we present an application of our results to the motion of
particles moving with an alignment on the velocities, where we enforce virtual nonlinear nonholonomic constraint satisfying the assumptions of Theorem \ref{main:theorem} and simulate its behavior.


\section{Mechanical Systems on Riemannian Manifolds}

In this section, we will review the equations of motion for mechanical systems subject to nonlinear constraints.

Suppose $Q$ is a differentiable manifold of dimension $n$. Throughout the text, $q^{i}$ will denote a particular choice of local coordinates on this manifold and $TQ$ denotes its tangent bundle, with $T_{q}Q$ denoting the tangent space at a specific point $q\in Q$ generated by the coordinate vectors $\frac{\partial}{\partial q^{i}}$. Usually $v_{q}$ denotes a vector at $T_{q}Q$ and, in addition, the coordinate chart $q^{i}$ induces a natural coordinate chart on $TQ$ denoted by $(q^{i},\dot{q}^{i})$. There is a canonical projection $\tau_{Q}:TQ \rightarrow Q$, sending each vector $v_{q}$ to the corresponding base point $q$. Note that in coordinates $\tau_{Q}(q^{i},\dot{q}^{i})=q^{i}$. The tangent map of the canonical projection is given by $T\tau_Q:TTQ\to TQ.$ The cotangent bundle of $Q$ is denoted by $T^*Q$ and for $q\in Q$ the cotangent space $T^*_qQ$ is generated by cotangent vectors $dq^i$ which satisfies the dual pairing $\langle dq^i,\frac{\partial}{\partial q^j}\rangle=\delta_{ij}$, where $\delta_{ij}$ is the Kronecker delta.

A vector field $X$ on $Q$ is a map assigning to each point $q\in Q$ a vector tangent to $q$, that is, $X(q)\in T_{q}Q$. In the context of mechanical systems, we find a special type of vector fields that are always defined on the tangent bundle $TQ$, considered as a manifold itself. A second-order vector field (SODE) $\Gamma$ on the tangent bundle $TQ$ is a vector field on the tangent bundle satisfying the property that $T\tau_{Q}\left(\Gamma (v_{q})\right) = v_{q}$. The expression of any SODE in coordinates is the following:
$$\Gamma(q^{i},\dot{q}^{i})= \dot{q}^{i}\frac{\partial}{\partial q^{i}} + f^{i}(q^{i},\dot{q}^{i}) \frac{\partial}{\partial \dot{q}^{i}},$$
where $f^{i}:TQ \rightarrow \mathbb{R}$ are $n$ smooth functions. We denote the set of all vector fields on $Q$ by $\mathfrak{X}(Q)$.

A one-form $\alpha$ on $Q$ is a map assigning to each point $q$ a cotangent vector to $q$, that is, $\alpha(q)\in T^{*}Q$. Cotangent vectors acts linearly on vector fields according to $\alpha(X) = \alpha_{i}X^{i}\in \mathbb{R}$ if $\alpha = \alpha_{i}dq^{i}$ and $X = X^{i} \frac{\partial}{\partial q^{i}}$. In the following, we will refer to two-forms or $(0,2)$-tensor fields which are bilinear maps that act on a pair of vector fields to produce a number and also to $(1,1)$-tensor fields which are linear maps that act on a vector field to produce a new vector field.

A symplectic form $\omega$ on a manifold $M$ is a $(0,2)$-type tensor field that is skew-symmetric and non-denerate, i.e., $\omega(X,Y)=-\omega(Y,X)$ for all vector fields $X$ and $Y$ and if $\omega(X,Y)=0$ for all vector fields $X$ then $Y=0$.

The symplectic form induces a linear isomorphism $\flat_{\omega}:\mathfrak{X}(M)\rightarrow \Omega^{1}(M)$, given by $\langle\flat_{\omega}(X),Y\rangle=\omega(X,Y)$ for any vector fields $X, Y$. The inverse of $\flat_{\omega}$ will be denoted by $\sharp_{\omega}$.

In the following, we will use the canonical almost tangent structure $J:TTQ \rightarrow TTQ$. This is a type $(1,1)$- tensor field on $TQ$ whose expression in local coordinates is
$J=dq^{i}\otimes \frac{\partial}{\partial \dot{q}^{i}}$, where $\otimes$ stands for the tensor product. For instance, if $\Gamma$ is a SODE vector field $J(\Gamma) = \dot{q}^{i}\frac{\partial}{\partial \dot{q}^{i}}$.

Given a Lagrangian function $L:TQ\rightarrow \mathbb{R}$, the associated energy $E_{L}$ is the function defined by $E_{L}(q,\dot{q})=\dot{q}\frac{\partial L}{\partial \dot{q}} - L(q,\dot{q})$ and we may write a symplectic form on $TQ$, denoted by $\omega_{L}$, defined by $\omega_L = -d(J^{*}dL)$. In natural coordinates of $TQ$, $\omega_{L}=\frac{\partial^{2} L}{\partial \dot{q}^{i} \partial q^{j}} dq^{i}\wedge dq^{j} + \frac{\partial^{2} L}{\partial \dot{q}^{i} \partial \dot{q}^{j}} dq^{i}\wedge d\dot{q}^{j}$. This geometric construction is used to write Euler-Lagrange dynamics as the integral curves of the vector field $\Gamma_{L}$ solving the equation $i_{\Gamma_{L}}\omega_{L}=dE_{L}$, where $i_{\Gamma_{L}}\omega_{L}$ denotes the contraction of $\Gamma_L$ and $\omega_L$ (see \cite{B}). In fact, this is the geometric equation defining Hamiltonian vector fields in general symplectic manifolds.

Before proceeding, we will recall the definition of Riemannian metric. A Riemannian metric is a generalization of the inner product on a vector space to arbitrary manifolds. In fact, one can describe it as an inner product in each tangent space $T_{q}Q$ that varies smoothly with the base point $q$. In particular, since the metric will be an inner product on each tangent space, as will see below, it will be defined on the space $TQ\times_{Q}TQ$, composed of pairs of tangent vectors lying in the same tangent space. In this way, we avoid defining the inner product between two vectors that are tangent at different points. More precisely,
\begin{definition}
    A Riemannian metric $\mathcal{G}$ on a manifold $Q$ is a $(0,2)$-tensor, i.e., a bilinear map $\mathcal{G}:TQ\times_{Q} TQ \rightarrow \mathbb{R}$, satisfying the following properties:
    \begin{enumerate}
        \item[(i)] symmetric:  $\mathcal{G}(v_{q},w_{q})=\mathcal{G}(w_{q},v_{q})$ for all $q\in Q$ and $v_{q}$,$w_{q} \in T_{q}Q$.
        \item[(ii)] non-degenerate: $\mathcal{G}(v_{q},w_{q})=0$ for all $w_{q}\in TQ$ if and only if $v_{q}=0$. 
        \item[(iii)] positive-definite: $\mathcal{G}(v_{q},v_{q})\geqslant 0$, with equality holding only if $v_{q}=0$.
    \end{enumerate}
    Accordingly, if $\mathcal{G}$ is a Riemannian metric then the pair $(Q,\mathcal{G})$ is called a Riemannian manifold.
\end{definition}

If $(q^{1},\dots,q^{n})$ are local coordinates on $Q$, then the local expression of the Riemannian metric $\mathcal{G}$ is
$\displaystyle{
	\mathcal{G}=\mathcal{G}_{i j} dq^{i} \otimes dq^{j}}$ with  $\displaystyle{\mathcal{G}_{i j}=\mathcal{G}\left(\frac{\partial}{\partial q^{i}},\frac{\partial}{\partial q^{j}}\right)}$.

In the following, we will make use of a special technique to lift a Riemannian metric on a manifold to a metric on the tangent bundle $TQ$. The complete lift of a Riemannian metric $\mathcal{G}$ on $Q$ is denoted by $\mathcal{G}^{c}$ and it is almost a Riemannian metric, since it does not satisfy property (iii) from the definition above, i.e., it is not positive-definite, which is similar with what happens in special relativity (see for instance \cite{schutz}) where the metric 
is indefinite. Given natural bundle coordinates on $TQ$, its local expression is $\displaystyle{\mathcal{G}^{c}=\dot{q}^{k}\frac{\partial \mathcal{G}_{ij}}{\partial q^{k}} dq^{i} \otimes dq^{j} + \mathcal{G}_{i j} dq^{i} \otimes d\dot{q}^{j} + \mathcal{G}_{i j} d\dot{q}^{i} \otimes dq^{j}}$.

For the Riemannian metric $\mathcal{G}$ on $Q$, we can use its non-degeneracy property to define the musical isomoprhism $\flat_{\mathcal{G}}:\mathfrak{X}(Q)\rightarrow \Omega^{1}(Q)$ defined by $\flat_{\mathcal{G}}(X)(Y)=\mathcal{G}(X,Y)$ for any $X, Y \in \mathfrak{X}(Q)$. Also, denote by $\sharp_{\mathcal{G}}:\Omega^{1}(Q)\rightarrow \mathfrak{X}(Q)$ the inverse musical isomorphism, i.e., $\sharp_{\mathcal{G}}=\flat_{\mathcal{G}}^{-1}$.



\begin{definition}
    The vertical lift of a vector field $X\in \mathfrak{X}(Q)$ to $TQ$ is defined by $$X_{v_{q}}^{V}=\left. \frac{d}{dt}\right|_{t=0} (v_{q} + t X(q)).$$
    The  complete lift of a vector field, $X$, which in local coordinates is given by $X=X^i\frac{\partial}{\partial q^i}$ is $$X^c=X^i\frac{\partial}{\partial q^i} + \dot{q}^j\frac{\partial X^i}{\partial q^j}\frac{\partial}{\partial \dot{q}^i}.$$ The vertical lift of a one-form $\alpha\in\Omega^1(Q)$ is defined as the pullback of $\alpha$ to $TQ$, i.e. $$\alpha^V=(\tau_Q)^*\alpha,$$ which locally is $\alpha^V=\alpha_idq^i$ and its complete lift is $$\alpha^c=\dot{q}^j\frac{\partial\alpha^i}{\partial q^j}dq^i + \alpha^idq^i.$$
\end{definition}
\begin{proposition}
    For a Riemannian metric $\mathcal{G}$ on $Q$, vector  fields  $X,Y\in\mathfrak{X}(Q)$ and a one-form $\alpha\in\Omega^1(Q)$ we have
    $$(\alpha(X))^V=\alpha^c(X^V),$$
    $$\mathcal{G}^c(X^V,Y^c) = \mathcal{G}^c(X^c,Y^V) = [\mathcal{G}(X,Y)]^V,$$
    $$\mathcal{G}^c(X^V,Y^V)=0.$$
\end{proposition}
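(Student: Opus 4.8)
The plan is to verify all three identities by direct computation in natural bundle coordinates $(q^i,\dot q^i)$ on $TQ$, substituting the local expressions of the various lifts and contracting. The whole argument rests on four elementary pairings, $dq^i(\partial/\partial q^k)=\delta^i_k$, $dq^i(\partial/\partial\dot q^k)=0$, $d\dot q^i(\partial/\partial q^k)=0$, and $d\dot q^i(\partial/\partial\dot q^k)=\delta^i_k$, together with the observation that the vertical lift of a function $f\in C^\infty(Q)$ (of which $\alpha(X)$ and $\mathcal{G}(X,Y)$ are instances) is simply its pullback $\tau_Q^* f$, i.e. the same coordinate function now regarded as $\dot q$-independent on $TQ$. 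Recall also that in coordinates the vertical lift of $X=X^i\,\partial/\partial q^i$ is $X^V=X^i\,\partial/\partial\dot q^i$, carrying only vertical components, while the complete lift $Y^c$ retains the base components $Y^j$ in its $\partial/\partial q^j$ part.

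For the first identity I would substitute $X^V=X^i\,\partial/\partial\dot q^i$ into the local expression of $\alpha^c$. The part of $\alpha^c$ built from the $dq^i$ annihilates $X^V$, since $dq^i$ kills every $\partial/\partial\dot q^k$, whereas the part built from the $d\dot q^i$ contributes $\alpha_i X^i$ via $d\dot q^i(\partial/\partial\dot q^k)=\delta^i_k$. This leaves $\alpha^c(X^V)=\alpha_iX^i$, which is exactly the coordinate expression of $(\alpha(X))^V=\tau_Q^*(\alpha_iX^i)$.

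The two metric identities follow the same pattern applied to the three-term local expression of $\mathcal{G}^c$. For $\mathcal{G}^c(X^V,Y^V)$ both arguments are purely vertical, so in each of the three terms at least one factor is a $dq$ contracted against a vertical vector and therefore vanishes; hence $\mathcal{G}^c(X^V,Y^V)=0$. For $\mathcal{G}^c(X^V,Y^c)$ the first term (proportional to $dq^i\otimes dq^j$) dies because $dq^i(X^V)=0$, the second term dies for the same reason, and only the third term $\mathcal{G}_{ij}\,d\dot q^i\otimes dq^j$ survives, producing $\mathcal{G}_{ij}X^i\,dq^j(Y^c)=\mathcal{G}_{ij}X^iY^j$, since $dq^j$ extracts the base component $Y^j$ of $Y^c$. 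Symmetrically, for $\mathcal{G}^c(X^c,Y^V)$ only the second term $\mathcal{G}_{ij}\,dq^i\otimes d\dot q^j$ survives and again yields $\mathcal{G}_{ij}X^iY^j$. Both therefore equal $\tau_Q^*(\mathcal{G}_{ij}X^iY^j)=[\mathcal{G}(X,Y)]^V$.

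I do not expect a genuine obstacle here: the statement is a bundle-coordinate bookkeeping exercise. The only points requiring care are tracking which slot each lift occupies in the tensor $\mathcal{G}^c$ and consistently using that vertical lifts are detected only by the $d\dot q$ factors while the $dq$-components of $\mathcal{G}^c$ detect the base (complete-lift) directions. One could alternatively argue coordinate-freely from the defining properties of the complete and vertical lifts, but the coordinate computation is the most transparent and uses directly the local expressions already recorded above.
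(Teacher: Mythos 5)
Your computation is correct, and it is in fact more than the paper offers: the paper states this proposition without proof, deferring to the reference of de Le\'on and Rodrigues, so there is no in-text argument to compare against. Your coordinate verification is the standard and entirely adequate route --- all three identities reduce to the elementary pairings you list, applied to the three-term local expression of $\mathcal{G}^{c}$ and to the local forms of the lifts. One point deserves explicit mention: the paper's displayed formula for the complete lift of a one-form reads $\alpha^{c}=\dot{q}^{j}\frac{\partial\alpha_{i}}{\partial q^{j}}dq^{i}+\alpha_{i}\,dq^{i}$, which is a typo; the correct expression is $\alpha^{c}=\dot{q}^{j}\frac{\partial\alpha_{i}}{\partial q^{j}}dq^{i}+\alpha_{i}\,d\dot{q}^{i}$. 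Your argument for the first identity tacitly uses the corrected version (you contract the $d\dot{q}^{i}$ part of $\alpha^{c}$ against $X^{V}$ to extract $\alpha_{i}X^{i}$); with the formula as literally printed, $\alpha^{c}(X^{V})$ would vanish and the identity would fail, so you should state the corrected local expression explicitly before using it. With that caveat, the proof is complete and sound.
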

For details of the aforementioned one can see \cite{Leon_Rodrigues}.

The complete lift of a Riemannian metric possess useful properties such as the one described in the following lemma.

\begin{lemma}\label{completemetricLemma}
    Let $(Q,\mathcal{G})$ be a Riemannian manifold and $\alpha\in \Omega^{1}(Q)$ a one-form. Then,
    $$\left[ \sharp_{\mathcal{G}}(\alpha)\right]^{V} = \sharp_{\mathcal{G}^{c}}(\alpha^{V}).$$
\end{lemma}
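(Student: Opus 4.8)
The plan is to exploit that $\sharp_{\mathcal{G}^{c}}$ is the inverse of the musical isomorphism $\flat_{\mathcal{G}^{c}}$, so that the claimed identity is equivalent to
$$\flat_{\mathcal{G}^{c}}\bigl([\sharp_{\mathcal{G}}(\alpha)]^{V}\bigr) = \alpha^{V},$$
i.e. to an equality of two one-forms on $TQ$. Writing $X := \sharp_{\mathcal{G}}(\alpha) \in \mathfrak{X}(Q)$, so that $\mathcal{G}(X,Y) = \alpha(Y)$ for every $Y \in \mathfrak{X}(Q)$, the goal becomes to show
$$\mathcal{G}^{c}(X^{V}, Z) = \alpha^{V}(Z) \qquad \text{for all } Z \in \mathfrak{X}(TQ).$$

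Both sides are $C^{\infty}(TQ)$-linear in $Z$, so it suffices to verify the equality on a family of vector fields spanning each tangent space of $TQ$. I would use the complete and vertical lifts $Y^{c}, Y^{V}$ of fields $Y \in \mathfrak{X}(Q)$, since in natural coordinates $(\partial/\partial q^{k})^{c} = \partial/\partial q^{k}$ and $(\partial/\partial q^{k})^{V} = \partial/\partial \dot{q}^{k}$, which together span $T_{v_q}(TQ)$ at every point. On a complete lift, the Proposition gives $\mathcal{G}^{c}(X^{V}, Y^{c}) = [\mathcal{G}(X,Y)]^{V} = [\alpha(Y)]^{V}$; on the other hand, since $\alpha^{V} = \tau_Q^{*}\alpha$ and $Y^{c}$ is $\tau_Q$-related to $Y$ (that is, $T\tau_Q \circ Y^{c} = Y \circ \tau_Q$), one gets $\alpha^{V}(Y^{c}) = \alpha(Y)\circ \tau_Q = [\alpha(Y)]^{V}$, and the two sides agree. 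On a vertical lift, the Proposition gives $\mathcal{G}^{c}(X^{V}, Y^{V}) = 0$, while $T\tau_Q(Y^{V}) = 0$ forces $\alpha^{V}(Y^{V}) = \alpha(T\tau_Q\, Y^{V}) = 0$; again the two sides agree. By linearity and the spanning property, the identity holds for all $Z$, which proves the lemma.

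The step I expect to require the most care is this reduction to lifts: one must confirm that the complete and vertical lifts genuinely span $T(TQ)$ pointwise and that both sides are tensorial, so that agreement on this generating family yields equality as one-forms rather than merely agreement of some components. As an independent check I would also verify the statement directly in natural coordinates: writing the matrix of $\mathcal{G}^{c}$ in the block form $\left(\begin{smallmatrix} \dot{q}^{k}\partial_{k}\mathcal{G} & \mathcal{G} \\ \mathcal{G} & 0 \end{smallmatrix}\right)$ and inverting it, the one-form $\alpha^{V} = \alpha_{i}\,dq^{i}$ is raised to $\mathcal{G}^{ij}\alpha_{j}\,\partial/\partial \dot{q}^{i}$, which is exactly $[\sharp_{\mathcal{G}}(\alpha)]^{V}$; this confirms the intrinsic argument.
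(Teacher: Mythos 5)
Your argument is correct and follows essentially the same route as the paper's own proof: both reduce the identity to checking $\mathcal{G}^{c}\bigl([\sharp_{\mathcal{G}}(\alpha)]^{V},Z\bigr)=\alpha^{V}(Z)$ against the spanning family of complete and vertical lifts $Y^{c},Y^{V}$, invoke the stated Proposition for the left-hand sides, and conclude by non-degeneracy of $\mathcal{G}^{c}$. Your additional justifications (the $\tau_{Q}$-relatedness argument for $\alpha^{V}(Y^{c})$ and $\alpha^{V}(Y^{V})$, and the coordinate cross-check) only make explicit steps the paper leaves implicit.
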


\begin{proof}
    Given any $Y\in\mathfrak{X}(Q)$, it is enough to prove the equality using the inner product with the lifts $Y^{c}$ and $Y^{V}$, because if $\{Y^{a}\}$ was a local basis of vector fields, then $\{(Y^{a})^{c}, (Y^{a})^{V}\}$ would also be a local basis of vector fields on $TQ$.

    On one hand, $$\mathcal{G}^{c}\left(\left[ \sharp_{\mathcal{G}}(\alpha)\right]^{V},Y^{V} \right) = 0 = \alpha^{V}(Y^{V}) = \mathcal{G}^{c}\left(\sharp_{\mathcal{G}^{c}}(\alpha^{V}),Y^{V} \right).$$

    On the other hand, $$\mathcal{G}^{c} \left(\left[ \sharp_{\mathcal{G}}(\alpha)\right]^{V}, Y^{c} \right) = \left[ \mathcal{G}(\sharp_{\mathcal{G}}(\alpha), Y)\right]^{V} = \left[ \alpha(Y)\right]^{V}$$
    
    $$= \alpha^{V}(Y^{c})=\mathcal{G}^{c} \left( \sharp_{\mathcal{G}^{c}}(\alpha^{V}),Y^{c} \right).$$
    Hence, the results follows by non-degeneracy of $\mathcal{G}^{c}$. 
\end{proof}

Finally, we recall the concept of a linear connection on a manifold that is used to generalize the concept of directional derivative of a vector field along another to a manifold. Formally, a linear connection on a manifold $Q$ is any map of the form $\nabla:\mathfrak{X}(Q)\times \mathfrak{X}(Q) \rightarrow \mathfrak{X}(Q)$ which is $C^{\infty}(Q)$-linear on the first factor,  $\mathbb{R}$-linear in the second factor, and if we denote the image of $X, Y \in \mathfrak{X}(Q)$ by $\nabla_{X} Y$, then $\nabla$ satisfies the Leibniz differentiation rule, i.e., 
$\nabla_{X} (f Y)=X(f)\cdot Y+f\cdot \nabla_{X} Y$ for every $f\in C^{\infty}(Q)$. In local coordinates, connections are fully described by the Chrystoffel symbols which are real-valued functions on $Q$ given by
\begin{equation*}
	\nabla_{\frac{\partial}{\partial q^{i}}}\frac{\partial}{\partial q^{j}}=\Gamma_{i j}^{k}\frac{\partial}{\partial q^{k}}.
\end{equation*}
Thus if $X$ and $Y$ are vector fields whose coordinate expressions are $X=X^{i}\frac{\partial}{\partial q^{i}}$ and $Y=Y^{i}\frac{\partial}{\partial q^{i}}$, then
\begin{equation*}
	\nabla_{X} Y=\left(X^{i}\frac{\partial Y^{k}}{\partial q^{i}}+X^{i}Y^{j} \Gamma_{i j}^{k}\right)\frac{\partial}{\partial q^{k}}.
\end{equation*}

In a Riemannian manifold, there is a special linear connection--the \textit{Levi-Civita connection}--associated to the Riemannian metric $\mathcal{G}$. This is the unique connection $\nabla^{\mathcal{G}}:\mathfrak{X}(Q)\times \mathfrak{X}(Q) \rightarrow \mathfrak{X}(Q)$  satisfying the following two additional properties:
\begin{enumerate}
\item[(i)] $[ X,Y]=\nabla_{X}^{\mathcal{G}}Y-\nabla_{Y}^{\mathcal{G}}X$ (symmetry)
\item[(ii)] $X(\mathcal{G}(Y,Z))=\mathcal{G}(\nabla_{X}^{\mathcal{G}}Y,Z)+\mathcal{G}(Y,\nabla_{X}^{\mathcal{G}}Z)$ (compatibillity of the metric).
\end{enumerate}

We might also introduce the covariant derivative of a vector field along a curve. The covariant derivative of a vector field $X\in \mathfrak{X}(Q)$ along a curve $q:I\rightarrow Q$, where $I$ is an interval of $\mathbb{R}$, is given by the local expression
\begin{equation*}
		\nabla_{\dot{q}}X (t)=\left( \dot{X}^{k}(t)+\dot{q}^{i}(t) X^{j}(t)\Gamma_{i j}^{k}(q(t)) \right)\frac{\partial}{\partial q^{k}}.
\end{equation*}
A geodesic in a Riemannian manifold is the curve of minimum length joining two points in space. Geodesics are characterized by the equation $\nabla_{\dot{q}} \dot{q}=0$.

\subsection{Nonlinear nonholonomic mechanics}

A nonlinear nonholonomic constraint on a mechanical system is a submanifold $\mathcal{M}$ of the tangent bundle $TQ$ from which the velocity of the system can not leave. Mathematically, the constraint may be written as the set of points where a function of the type $\phi:TQ \rightarrow \mathbb{R}^{m}$ vanishes, where $m < n=\dim Q$. That is, $\mathcal{M}=\phi^{-1}(\{0\})$. If every point in $\mathcal{M}$ is regular, i.e., the tangent map $T_{p}\phi$ is surjective for every $p\in \mathcal{M}$, then $\mathcal{M}$ is a submanifold of $TQ$ with dimension $2n-m$ by the regular level set theorem. 

Now let $\phi = (\phi^{1}, \dots, \phi^{m})$ denote the coordinate functions of the constraint $\phi$. Considering the dual of the canonical almost tangent structure $J$, we have that $J^{*}(d\phi^{a}) = \frac{\partial \phi^{a}}{\partial \dot{q}^{i}}dq^{i}$. Notice also that
$J^{*}(d\phi^{a})(X^{V}) = 0$. The equations of motion are integral curves of a vector field $\Gamma_{nh}$ defined by the equations
\begin{equation}\label{noneq}
    \begin{split}
        & i_{\Gamma_{nh}}\omega_{L} - dE_{L} = \lambda_{a}J^{*}(d\phi^{a}) \\
        & \Gamma_{nh} \in TM,
    \end{split}
\end{equation}
where $\lambda_{a}$ are Lagrange multiplier's to be determined.

These equations have a well-defined solution if $\sharp_{\omega_{L}}(J^{*}(d\phi^{a}))\cap TM = \{0\}$. Moreover, the expression in coordinates of integral curves of $\Gamma_{nh}$ are called Chetaev's equations:
\begin{equation}
    \begin{split}
        & \frac{d}{dt}\left(\frac{\partial L}{\partial \dot{q}}\right)-\frac{\partial L}{\partial q}=\lambda_{a} \frac{\partial \phi^{a}}{\partial \dot{q}} \\
        & \phi^{a}(q,\dot{q}) = 0
    \end{split}
\end{equation}
and they are the equations of motion for systems with nonlinear constraints (see \cite{paula}, \cite{cendra}, \cite{MdLeon} for more details).

In the following, we will consider a slight generalization of the concept of distribution. We will consider a mapping that to each point $v_{q}$ on the submanifold $\mathcal{M}$ assigns a vector subspace of $T_{v_{q}}(TQ)$. This map is a distribution on $TQ$ restricted to $\mathcal{M}$. From now on, let $S$ be a distribution on $TQ$ restricted to $\mathcal{M}$, whose annihilator is spanned by the one-forms $J^{*}(d\phi^{a})$, i.e.,$$S^{o}=\left\langle \{J^{*} (d\phi^{a})\}\right\rangle$$

Chetaev's equations may be written in Riemannian form using a geodesic-like equation according to the following theorem:

\begin{theorem}
    A curve $q:I\rightarrow Q$ is a solution of Chetaev's equations for a mechanical type Lagrangian if and only if $\phi(q,\dot{q})=0$ and it satisfies the equation
    \begin{equation}\label{Chetaev's eqns}
        \left( \nabla_{\dot{q}}\dot{q} + \text{grad} V \right)^{V} \in S^{\bot},
    \end{equation}
    where $S^{\bot}$, the orthogonal distribution to $S$, with respect to the semi-Riemannian metric $\mathcal{G}^{c}$.
\end{theorem}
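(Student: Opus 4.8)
\emph{Proof proposal.} The plan is to reduce the intrinsic condition \eqref{Chetaev's eqns} to the coordinate form of Chetaev's equations by unwinding the three geometric ingredients that appear in it: the covariant acceleration, the orthogonal complement $S^{\bot}$ taken with respect to $\mathcal{G}^{c}$, and the vertical lift. First I would expand the Euler--Lagrange operator for a mechanical Lagrangian $L=\tfrac12\mathcal{G}(\dot{q},\dot{q})-V$. A standard computation shows that $\frac{d}{dt}\frac{\partial L}{\partial \dot{q}^{k}}-\frac{\partial L}{\partial q^{k}}$ equals $\mathcal{G}_{kj}(\nabla_{\dot{q}}\dot{q})^{j}+\frac{\partial V}{\partial q^{k}}$, the velocity-quadratic terms reassembling into the Christoffel symbols of the Levi--Civita connection. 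Since $\frac{\partial V}{\partial q^{k}}=\mathcal{G}_{kj}(\mathrm{grad}\,V)^{j}$, the left-hand side is exactly $\flat_{\mathcal{G}}\big(\nabla_{\dot{q}}\dot{q}+\mathrm{grad}\,V\big)$ in components. Hence Chetaev's equations are equivalent to $\phi(q,\dot{q})=0$ together with $\flat_{\mathcal{G}}(W)=\lambda_{a}J^{*}(d\phi^{a})$, writing $W:=\nabla_{\dot{q}}\dot{q}+\mathrm{grad}\,V$; it then remains to see that this last identity is the coordinate shadow of $W^{V}\in S^{\bot}$.

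The central step is the identity $\flat_{\mathcal{G}^{c}}(W^{V})=\big[\flat_{\mathcal{G}}(W)\big]^{V}$. This follows from Lemma \ref{completemetricLemma}: setting $\alpha:=\flat_{\mathcal{G}}(W)$ so that $W=\sharp_{\mathcal{G}}(\alpha)$, the lemma gives $W^{V}=[\sharp_{\mathcal{G}}(\alpha)]^{V}=\sharp_{\mathcal{G}^{c}}(\alpha^{V})$, and applying $\flat_{\mathcal{G}^{c}}$ yields the claim. Alternatively one checks it directly: since $W^{V}=W^{j}\frac{\partial}{\partial \dot{q}^{j}}$, contracting the coordinate expression of $\mathcal{G}^{c}$ with $W^{V}$ annihilates the two terms carrying $dq^{i}$ in the first slot and leaves $\mathcal{G}_{ij}W^{i}\,dq^{j}$. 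The upshot is that $\flat_{\mathcal{G}^{c}}(W^{V})$ is a one-form on $TQ$ whose only nonzero components are along $dq^{j}$, with coefficients $\mathcal{G}_{ij}W^{i}$.

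Finally I would translate membership in $S^{\bot}$ through the musical isomorphism of $\mathcal{G}^{c}$. Non-degeneracy of $\mathcal{G}^{c}$ gives $S^{\bot}=\sharp_{\mathcal{G}^{c}}(S^{o})$, so $W^{V}\in S^{\bot}$ if and only if $\flat_{\mathcal{G}^{c}}(W^{V})\in S^{o}=\langle J^{*}(d\phi^{a})\rangle$. Because both $\flat_{\mathcal{G}^{c}}(W^{V})$ and each generator $J^{*}(d\phi^{a})=\frac{\partial \phi^{a}}{\partial \dot{q}^{i}}dq^{i}$ lie in the subspace of $T^{*}(TQ)$ spanned by $\{dq^{i}\}$, this membership is purely a condition on the $dq^{i}$-coefficients: it holds exactly when $\mathcal{G}_{ij}W^{i}=\lambda_{a}\frac{\partial \phi^{a}}{\partial \dot{q}^{j}}$ for some multipliers $\lambda_{a}$, that is, when $\flat_{\mathcal{G}}(W)=\lambda_{a}J^{*}(d\phi^{a})$. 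Combined with the constraint $\phi(q,\dot{q})=0$, this is precisely Chetaev's equations, establishing both implications simultaneously.

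The step I expect to be most delicate is the second one, since $J^{*}(d\phi^{a})$ is generally \emph{not} the vertical lift of a one-form on $Q$ (its coefficients $\frac{\partial \phi^{a}}{\partial \dot{q}^{i}}$ depend on $\dot{q}$ for genuinely nonlinear constraints). Consequently the equivalence of the two span conditions should be argued pointwise along the curve at each $v_{q}\in\mathcal{M}$, rather than by invoking a global identification of one-forms on $Q$ with one-forms on $TQ$. The fact that makes this harmless is structural: $J^{*}$ sends everything into the $dq^{i}$-block of $T^{*}(TQ)$, which is exactly the block in which $\flat_{\mathcal{G}^{c}}(W^{V})$ also lands, so comparing coefficient vectors is legitimate and the $\lambda_{a}$ of Chetaev's equations are recovered as the coordinates of $\flat_{\mathcal{G}^{c}}(W^{V})$ in the frame $\{J^{*}(d\phi^{a})\}$.
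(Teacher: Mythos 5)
Your proposal is correct and follows essentially the same route as the paper's proof: expand the Euler--Lagrange operator of the mechanical Lagrangian to identify it with $\flat_{\mathcal{G}}\left(\nabla_{\dot{q}}\dot{q}+\mathrm{grad}\,V\right)$, then translate the multiplier term through the musical isomorphisms of $\mathcal{G}^{c}$ via $S^{\bot}=\sharp_{\mathcal{G}^{c}}(S^{o})$. The only cosmetic difference is that you work on the one-form side with $\flat_{\mathcal{G}^{c}}$ while the paper computes $\sharp_{\mathcal{G}^{c}}(J^{*}(d\phi^{a}))$ directly, and your closing remark about the pointwise nature of the identification (since $J^{*}(d\phi^{a})$ is not the vertical lift of a one-form on $Q$ for genuinely nonlinear constraints) is a valid refinement of a point the paper leaves implicit.
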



\begin{proof}
    Suppose the Lagrangian $L$ is determined by a Riemannian metric $\mathcal{G}$ on $Q$ and a potential function $V$, so that its local expression is
    $$L(q,\dot{q})=\frac{1}{2}\mathcal{G}_{ij}\dot{q}^{i}\dot{q}^{j} - V(q).$$

    Chetaev's equations consist of Euler-Lagrange equations plus a reaction force term responsible for enforcing the constraints. In local coordinates we eventually get
    $$\ddot{q}^{i} - \mathcal{G}^{ij}\left[ \frac{1}{2}\frac{\partial \mathcal{G}_{lk}}{\partial q^{j}}\dot{q}^{l}\dot{q}^{k}-\frac{\partial \mathcal{G}_{lj}}{\partial q^{k}}\dot{q}^{l}\dot{q}^{k} - \frac{\partial V}{\partial q^{j}}\right] = \lambda_{a}\mathcal{G}^{ij}\frac{\partial \phi^{a}}{\partial \dot{q}^{j}},$$
    where $\mathcal{G}^{ij}$ is the inverse matrix of $\mathcal{G}_{ij}$. The left-hand side can be recognized to be the coordinate expression of the vector field
    $$\nabla_{\dot{q}}\dot{q} + \text{grad} V$$
    (see \cite{B&L} for details). We will show that the right-hand side is the coordinate expression of the vector field $\sharp_{\mathcal{G}^{c}}(J^{*}(d\phi^{a}))$. 
    
    Given a one-form $\alpha$ on $TQ$, the inverse musical isomorphism $\sharp_{\mathcal{G}^{c}}(\alpha)$ is characterized by
    $$\mathcal{G}^{c}(\sharp_{\mathcal{G}^{c}}(\alpha), X) = \langle \alpha, X \rangle, \quad \text{for any } X\in \mathfrak{X}(TQ).$$

    Using this property, and taking  into account the coordinate expression of $J^{*}(d\phi^{a})$, we can deduce from
    \begin{equation*}
        \begin{cases}
            \langle J^{*}(d\phi^{a}), \frac{\partial}{\partial q^{j}}\rangle & = \frac{\partial \phi^{a}}{\partial \dot{q}^{j}}\\
            \langle J^{*}(d\phi^{a}), \frac{\partial}{\partial \dot{q}^{j}}\rangle & = 0
        \end{cases}
    \end{equation*}
    that $\sharp_{\mathcal{G}^{c}} (J^{*}(d\phi^{a})) = \mathcal{G}^{ij}\frac{\partial \phi^{a}}{\partial \dot{q}^{j}}\frac{\partial}{\partial \dot{q}^{j}} \in \sharp_{\mathcal{G}^{c}}(S^{o})$.  In addition, we have that $S^{\bot}$ satisfies $S^{\bot} = \sharp_{\mathcal{G}^{c}}(S^{o})$. 

    Thus, using the coordinate expression of the vertical lift we deduce that
    $$(\nabla_{\dot{q}}\dot{q} + \text{grad} V)^{V} = \lambda_{a} \sharp_{\mathcal{G}^{c}}J^{*}(d\phi^{a}),$$
    which finishes the proof.
\end{proof}

\begin{remark}
 Notice that $S^{\bot}$ is spanned by vertical vectors in $TQ$. This observation will be relevant later in the paper.\hfill$\diamond$
\end{remark}

\section{Virtual nonholonomic constraints}\label{sec:controler}
Next, we present the rigorous construction of virtual nonholonomic  constraints. In contrast to the case of standard constraints on mechanical systems, the concept of virtual constraint is always associated with a controlled system and not just with a submanifold defined by the constraints. 

Given an external force $F^{0}:TQ\rightarrow T^{*}Q$ and a control force $F:TQ\times U \rightarrow T^{*}Q$ of the form
\begin{equation}
    F(q,\dot{q},u) = \sum_{a=1}^{m} u_{a}f^{a}(q)
\end{equation}
where $f^{a}\in \Omega^{1}(Q)$ with $m<n$, $U\subset\mathbb{R}^{m}$ the set of controls and $u_a\in\mathbb{R}$ with $1\leq a\leq m$ the control inputs, consider the associated mechanical control system of the form
\begin{equation}\label{mechanical:control:system}
    \nabla_{\dot{q}}\dot{q} =Y^0(q,\dot{q})+u_{a}Y^{a}(q).
\end{equation}
where $Y^0(q,\dot{q})=\sharp_{\mathcal{G}} (F^0(q, \dot{q}))$ and $Y^{a}=\sharp_{\mathcal{G}} (f^{a}(q)).$

Hence, the solutions of the previous equation are the trajectories of a vector field of the form
\begin{equation}\label{SODE}\Gamma(q, \dot{q}, u)=G(q,\dot{q})+u_{a}(Y^{a})_{(q,\dot{q})}^{V}.\end{equation}
We call each $Y^{a}=\sharp(f^{a})$ a control force vector field, $G$ is the vector field determined by the unactuated forced mechanical system
\begin{equation*}
    \nabla_{\dot{q}}\dot{q} =Y^0(q,\dot{q}).
\end{equation*}

\begin{definition}
    The distribution $\mathcal{F}\subseteq TQ$ generated by the vector fields  $Y^{a}=\sharp_{\mathcal{G}}(f^{a})$ is called the \textit{input distribution} associated with the mechanical control system \eqref{mechanical:control:system}.
\end{definition}

Now we will define the concept of virtual nonholonomic constraint.

\begin{definition}
A \textit{virtual nonholonomic constraint} associated with the mechanical control system \eqref{mechanical:control:system} is a controlled invariant submanifold $\mathcal{M}\subseteq TQ$ for that system, that is, 
there exists a control function $\hat{u}:\mathcal{M}\rightarrow \mathbb{R}^{m}$ such that the solution of the closed-loop system satisfies $\psi_{t}(\mathcal{M})\subseteq \mathcal{M}$, where $\psi_{t}:TQ\rightarrow TQ$ denotes its flow.
\end{definition}



\begin{definition}
    Two subspaces $W_1$ and $W_2$ of a vector space $V$ are transversal if 
    \begin{enumerate}
        \item $V = W_1+W_2$
        \item $\dim V =\dim W_1 + \dim W_2$, i.e. the dimensions of $W_1$ and $W_2$ are complementary with respect to the ambient space dimension.
    \end{enumerate}
\end{definition}
\begin{theorem}\label{main:theorem}
If the tangent space, $T_{v_{q}}\mathcal{M}$, of the manifold $\mathcal{M}$ and the vertical lift of the control input distribution $\mathcal{F}$ are transversal and $T_{v_{q}}\mathcal{M}\cap \mathcal{F}^V=\{0\}$, then there exists a unique control function making $\mathcal{M}$ a virtual nonholonomic constraint associated with the mechanical control system \eqref{mechanical:control:system}.
\end{theorem}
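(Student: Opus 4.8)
The plan is to reduce the dynamical statement about flow-invariance to a pointwise problem in linear algebra. First I would invoke the standard criterion that a submanifold is invariant under the flow of a vector field precisely when that vector field is tangent to it along the submanifold. Thus producing a control $\hat{u}:\mathcal{M}\to\mathbb{R}^m$ that makes $\mathcal{M}$ controlled invariant is equivalent to requiring, at every $v_q\in\mathcal{M}$, that the closed-loop field be tangent to $\mathcal{M}$, i.e.
\[
\Gamma(v_q,\hat{u}(v_q)) = G(v_q) + \hat{u}_a(v_q)\,(Y^a)^V_{v_q} \in T_{v_q}\mathcal{M}.
\]
Once $\hat{u}$ is found with this property, the restriction of $\Gamma(\cdot,\hat{u}(\cdot))$ to $\mathcal{M}$ is a genuine section of $T\mathcal{M}$, and its flow $\psi_t$ then automatically satisfies $\psi_t(\mathcal{M})\subseteq\mathcal{M}$.

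Next I would exploit the hypotheses. Transversality together with $T_{v_q}\mathcal{M}\cap\mathcal{F}^V=\{0\}$ says exactly that, at each $v_q\in\mathcal{M}$, there is a direct-sum decomposition
\[
T_{v_q}(TQ) = T_{v_q}\mathcal{M}\oplus\mathcal{F}^V_{v_q},
\]
the dimension count $2n=(2n-m)+m$ being consistent. Let $P_{v_q}\colon T_{v_q}(TQ)\to\mathcal{F}^V_{v_q}$ denote the projection onto $\mathcal{F}^V$ along $T_{v_q}\mathcal{M}$. The tangency condition above is then equivalent to $P_{v_q}\big(\Gamma(v_q,\hat{u}(v_q))\big)=0$.

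Because each $(Y^a)^V$ already lies in $\mathcal{F}^V$, we have $P_{v_q}((Y^a)^V)=(Y^a)^V$, so the condition reduces to
\[
P_{v_q}(G(v_q)) + \hat{u}_a(v_q)\,(Y^a)^V_{v_q} = 0.
\]
The space $\mathcal{F}^V_{v_q}$ is spanned by the $m$ vectors $(Y^a)^V_{v_q}$ and, by the transversality hypothesis, has dimension $m$; hence those vectors form a basis of $\mathcal{F}^V_{v_q}$. Consequently the element $-P_{v_q}(G(v_q))\in\mathcal{F}^V_{v_q}$ has unique coordinates $\hat{u}_a(v_q)$ in this basis, which yields both existence and uniqueness of the feedback at each point. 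Smoothness then follows because the splitting, the projection $P$, the drift $G$, and the frame $(Y^a)^V$ all depend smoothly on $v_q$.

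The step I expect to require the most care is the passage between flow-invariance and the infinitesimal tangency condition, together with the verification that the decomposition $T_{v_q}(TQ)=T_{v_q}\mathcal{M}\oplus\mathcal{F}^V_{v_q}$ holds with constant rank over all of $\mathcal{M}$, so that $P$ is a smooth bundle map and $\hat{u}$ is a genuine smooth feedback rather than merely a pointwise assignment. The linear-algebra solve itself is immediate once this structural splitting and the frame property of $\{(Y^a)^V\}$ are in place.
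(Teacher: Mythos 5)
Your proposal is correct and follows essentially the same route as the paper: both arguments hinge on the pointwise direct-sum decomposition $T_{v_q}(TQ)=T_{v_q}\mathcal{M}\oplus\mathcal{F}^V_{v_q}$ guaranteed by transversality and the trivial intersection, from which the closed-loop tangency condition determines the feedback uniquely. The only difference is presentational: you phrase the final linear solve via the projection onto $\mathcal{F}^V$ along $T\mathcal{M}$, whereas the paper writes it out as the $m\times m$ system $A^b_a=d\phi^b\bigl((Y^a)^V\bigr)$ and proves invertibility of $A$ from $T_{v_q}\mathcal{M}\cap\mathcal{F}^V=\{0\}$.
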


\begin{proof}
    Suppose that $TTQ=T\mathcal{M}\oplus \mathcal{F}^V$ and that  trajectories of the control system \eqref{mechanical:control:system} may be written as the integral curves of the vector field $\Gamma$ defined by \eqref{SODE}. For each $v_{q}\in \mathcal{M}_{q}$, we have that $$\Gamma(v_{q})\in T_{v_{q}}(TQ)=T_{v_{q}}\mathcal{M}\oplus \hbox{span}\Big{\{}(Y^{a})_{v_{q}}^{V} \Big{\}},$$ with $Y^{a}=\sharp(f^{a})$. Using the uniqueness decomposition property arising from transversality, we conclude there exists a unique vector $\tau^{*}(v_{q})=(\tau_{1}^{*}(v_{q}),\cdots, \tau_{m}^{*}(v_{q}))\in \mathbb{R}^{m}$ such that $$\Gamma(v_{q})=G(v_{q})+\tau_{a}^{*}(v_{q})(Y^{a})_{v_{q}}^{V}\in T_{v_{q}}\mathcal{M}.$$ If $\mathcal{M}$ is defined by $m$ constraints of the form $\phi^{b}(v_{q})=0$, $1\leq b\leq m$, then the condition above may be rewritten as $$d\phi^{b}(G(v_{q})+\tau_{a}^{*}(v_{q})(Y^{a})_{v_{q}}^{V})=0,$$ which is equivalent to $$\tau_{a}^{*}(v_{q})d\phi^{b}((Y^{a})_{v_{q}}^{V})=-d\phi^{b}(G(v_{q})).$$ Note that, the equation above is a linear equation of the form $A(v_{q})\tau=b(v_{q})$, where $b(v_{q})$ is the vector $(-d\phi^{1}(G(v_{q})), \dots, -d\phi^{m}(G(v_{q})))\in \mathbb{R}^{m}$ and $A(v_{q})$ is the $m\times m$ matrix with entries $A^{b}_{a}(v_{q})=d\phi^{b}((Y^{a})_{v_{q}}^{V})= \frac{\partial\phi^b}{\partial\dot{q}}(q,\dot{q})(Y^{a})$, where the last equality may be deduced by computing the expressions in local coordinates. That is, if $(q^{i}, \dot{q}^{i})$ are natural bundle coordinates for the tangent bundle, then
    \begin{equation*}
        \begin{split}
            d\phi^{b}((Y^{a})_{v_{q}}^{V}) & = \left(\frac{\partial \phi^{b}}{\partial q^{j}}dq^{j} + \frac{\partial\phi^b}{\partial \dot{q}^i}d\dot{q}^{i}\right)\left(Y^{a,k}\frac{\partial}{\partial \dot{q}^{k}}\right) \\
            & = \frac{\partial\phi^b}{\partial \dot{q}^i}Y^{a,i} = \frac{\partial\phi^b}{\partial \dot{q}}(q,\dot{q})(Y^{a}).
        \end{split}
    \end{equation*}
    In addition, $A(v_{q})$ has full rank, since its columns are linearly independent. In fact suppose that
    \begin{equation*}
        c_{1}\begin{bmatrix} \frac{\partial\phi^1}{\partial \dot{q}}(Y^{1}) \\
        \vdots \\
        \frac{\partial\phi^m}{\partial \dot{q}}(Y^{1}) \end{bmatrix} + \cdots + c_{m}\begin{bmatrix} \frac{\partial\phi^1}{\partial \dot{q}}(Y^{m}) \\
        \vdots \\
        \frac{\partial\phi^m}{\partial \dot{q}}(Y^{m}) \end{bmatrix}= 0,
    \end{equation*}
    which is equivalent to
    \begin{equation*}
        \begin{bmatrix} \frac{\partial\phi^1}{\partial \dot{q}}(c_{1}Y^{1}+\cdots + c_{m}Y^{m}) \\
        \vdots \\
        \frac{\partial\phi^m}{\partial \dot{q}}(c_{1}Y^{1}+\cdots + c_{m}Y^{m}) \end{bmatrix}=0.
    \end{equation*}
    Moreever, by transversality we have $T_{v_q}\mathcal{M}\cap \mathcal{F}^V = \{0\}$ which implies that $c_{1}Y^{1}+\cdots + c_{m}Y^{m}=0$. Since $\{Y_{i}\}$ are linearly independent we conclude that $c_{1}=\cdots=c_{m}=0$ and $A$ has full rank. But, since $A$ is an $m\times m$ matrix, and $\mathcal{M}$ is a constrained submanifold, it must be invertible. Therefore, there is a unique vector $\tau^{*}(v_{q})$ satisfying the matrix equation and $\tau^{*}:\mathcal{M}\rightarrow \mathbb{R}^{m}$ is smooth since it is the solution of a matrix equation depending smoothly on $v_{q}$.
\end{proof}


\begin{remark} In previous studies, virtual nonholonomic constraints were defined in somewhat different wyays. The most general one, comprising every single other as a particular case, is given in \cite{moran2021energy} where a virtual nonholonomic constraint is a set of the form $\mathcal{M}=\{(q,p)\in Q\times \mathbb{R}^{n} \ | \ h(q,p)=0\}$,
for which there exists a control law making it invariant under the flow of the closed-loop controlled Hamiltonian equations. This constraint may be rewritten using the cotangent bundle $T^{*}Q$ and $h$ may be seen as a function $h:T^{*}Q\rightarrow \mathbb{R}^{m}$. In addition, $h$ should satisfy $\text{rank } dh(q,p) = m$ for all $(q,p)\in \mathcal{M}$.

Our definition falls under this general definition. In order to see this, we must rewrite the virtual nonholonomic constraints and the control system on the cotangent bundle.

Indeed, consider the Hamiltonian function $H:T^{*}Q \rightarrow \mathbb{R}$ obtained from a Lagrangian function in the following way
$$H(q,p)=p\dot{q}(q,p)-L(q,\dot{q}(q,p)),$$
where $\dot{q}(q,p)$ is a function of $(q,p)$ given by the inverse of the Legendre transformation
$p=\frac{\partial L}{\partial \dot{q}}$.
The controlled Hamiltonian equations are given by
$$\dot{q}=\frac{\partial H}{\partial p}, \quad \dot{p}=-\frac{\partial H}{\partial q} + F^{0}(q,\dot{q}(q,p)) + u_{a}f^{a}(q),$$
where $F^{0}$ is an external force map. Now, any submanifold $\mathcal{M}\subseteq TQ$ might be defined as the set
$$\mathcal{M}= \{ (q,\dot{q})\in TQ \ | \ \phi^{a}(q,\dot{q}) = 0\},$$
where $d\phi^{a}$ with $1 \leqslant a\leqslant m$ are $m$ linearly independent constraints. The cotangent version of the constraint manifold is the set
$$\tilde{\mathcal{M}}= \{ (q,p) \ | \ \phi^{a}(q,\dot{q}(q,p)) = 0 \}. $$
Therefore, we set $$h(q,p)=(\phi^{1}(q,\dot{q}(q,p)),\cdots, \phi^{m}(q,\dot{q}(q,p))).\hfill\diamond $$ 
\end{remark}


\begin{example} \label{ex 1}
    {Consider a particle moving in three dimensional space and subject to the gravitational potential}. Its configuration space is $Q=\R^3$ with $q=(x,y,z)\in Q.$ The Lagrangian $L:TQ\rightarrow\R$, is given by 
    $$L(q,\dot{q})=\frac{m}{2}\left(\dotx^2+\doty^2+\dotz^2\right)-mgz,$$
    and we consider the constraint that is imposed by  $\Phi(q,\dot{q})=0$ with \[\Phi(q,\dot{q})=a^2\left(\dotx^2+\doty^2\right)-\dotz^2\] and the constraint manifold defined as \[\mathcal{M}=\{(q,\dot{q})\in TQ \;: \; \Phi(q,\dot{q})=0\}.\] Consider also the control force $F:TQ\times U\to T^*Q$
    \[F(q,\dot{q},u)=uf=u\left(xdx+ydy+dz\right).\]
    The controlled Euler-Lagrange equations are \[m\ddot{x}=ux, \quad m\ddot{y}=uy, \quad m\ddot{z}=-gm +u.\]
    The tangent space of the constraint manifold $\mathcal{M}$ is given by
    \begin{align*}
        T_{(q,\dot{q})}\mathcal{M}&=\{v\in TTQ\; :\; d\Phi(v)=0\}\\
         & =\spn\{X_1, X_2, X_3, X_4, X_5\},
    \end{align*}
    where $(q,\dot{q})\in\mathcal{M}$ and
\[X_1=\frac{\partial}{\partial x},  \quad X_2=\frac{\partial}{\partial y}, \quad X_3=\frac{\partial}{\partial z}, \quad\]
        \[X_4=\dotz\frac{\partial}{\partial \doty}+a^2\doty\frac{\partial}{\partial \dotz} \quad        X_5=\dotz\frac{\partial}{\partial \dotx}+a^2\dotx\frac{\partial}{\partial \dotz}.\]
    and the input distribution $\mathcal{F}$ is generated by the vector field
    \[Y=\frac{x}{m}\frac{\partial}{\partial x}+\frac{y}{m}\frac{\partial}{\partial y}+\frac{1}{m}\frac{\partial}{\partial z}.\]

    The control law that makes the constraint manifold invariant is given by 
    \[\hat{u}=-\frac{mg\dotz}{a^2x\dotx+a^2y\doty-\dotz}.\]   
\end{example}

In the following, we characterize the closed-loop dynamics as solutions of the nonholonomic equations \eqref{noneq}.

\begin{theorem}
    A curve $q:I\rightarrow Q$ is a trajectory of the closed-loop system for the Lagrangian control system \eqref{mechanical:control:system} making $\mathcal{M}$ invariant if and only if it satisfies
        \begin{equation}\label{constrained:equation}
        (\nabla_{\dot{q}}\dot{q} + \text{grad} V)^{V} =-\tau^*_a ( \sharp_{\mathcal{G}^{c}}(f^a)^{V}),
    \end{equation}
    or, in other words,
    \begin{equation}
        (\nabla_{\dot{q}}\dot{q} + \text{grad} V )^{V} \in \mathcal{F}^V
    \end{equation}
    
\noindent where $\mathcal{F}^V$ is the distribution on $TQ$ spanned by the vector fields $\{\sharp_{\mathcal{G}^{c}}(f^a)^{V}\}$ and $\tau^*_a$ the unique control from Theorem \ref{main:theorem}.
\end{theorem}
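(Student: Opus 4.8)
The plan is to read off the closed-loop equations of motion from Theorem \ref{main:theorem} and then transport them to the tangent bundle by vertical lift, in complete analogy with the proof of the Chetaev theorem \eqref{Chetaev's eqns}. First I would fix the feedback to be the unique invariance-preserving control $\hat{u}=\tau^{*}$ produced by Theorem \ref{main:theorem}, so that the closed-loop vector field is $\Gamma_{cl}=G+\tau^{*}_{a}(Y^{a})^{V}$ with $Y^{a}=\sharp_{\mathcal{G}}(f^{a})$. For a mechanical Lagrangian $L=\tfrac12\mathcal{G}_{ij}\dot{q}^{i}\dot{q}^{j}-V$ whose conservative force is $Y^{0}=-\text{grad}\,V$, the integral curves of the drift field $G$ satisfy the geodesic-with-potential equation $\nabla_{\dot{q}}\dot{q}+\text{grad}\,V=0$, and therefore the integral curves of $\Gamma_{cl}$ satisfy the base-space identity $\nabla_{\dot{q}}\dot{q}+\text{grad}\,V=\tau^{*}_{a}\,\sharp_{\mathcal{G}}(f^{a})$ on $Q$.

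The second step is to lift this identity from $Q$ to $TQ$. Applying the vertical lift to both sides and invoking Lemma \ref{completemetricLemma} in the form $[\sharp_{\mathcal{G}}(f^{a})]^{V}=\sharp_{\mathcal{G}^{c}}((f^{a})^{V})$ rewrites the right-hand side as $\tau^{*}_{a}\,\sharp_{\mathcal{G}^{c}}(f^{a})^{V}$, which is exactly equation \eqref{constrained:equation} (the sign being whatever is fixed by the convention for $\tau^{*}$ in Theorem \ref{main:theorem}). Since $\mathcal{F}$ is spanned by $\{Y^{a}\}$, its vertical lift is $\mathcal{F}^{V}=\spn\{(Y^{a})^{V}\}=\spn\{\sharp_{\mathcal{G}^{c}}(f^{a})^{V}\}$, so the membership $(\nabla_{\dot{q}}\dot{q}+\text{grad}\,V)^{V}\in\mathcal{F}^{V}$ follows at once. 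This proves the forward implication and the equivalence of the two displayed formulations.

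For the converse I would start from a curve lying on $\mathcal{M}$ whose lift satisfies $(\nabla_{\dot{q}}\dot{q}+\text{grad}\,V)^{V}=c_{a}\,\sharp_{\mathcal{G}^{c}}(f^{a})^{V}$ for some functions $c_{a}$. Because the vertical lift $X\mapsto X^{V}$ is injective on $\mathfrak{X}(Q)$ (in coordinates it sends $X^{i}\tfrac{\partial}{\partial q^{i}}$ to $X^{i}\tfrac{\partial}{\partial \dot{q}^{i}}$) and, again by Lemma \ref{completemetricLemma}, this descends to $\nabla_{\dot{q}}\dot{q}+\text{grad}\,V=c_{a}Y^{a}$ on $Q$, i.e. the control system \eqref{mechanical:control:system} with input $u_{a}=c_{a}$. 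Thus the curve is a closed-loop trajectory; it only remains to identify $c_{a}=\tau^{*}_{a}$. Here I would use that the curve stays on $\mathcal{M}$, so its velocity is tangent to $\mathcal{M}$, whence $\Gamma(v_{q})=G(v_{q})+c_{a}(Y^{a})^{V}\in T_{v_{q}}\mathcal{M}$; the transversal splitting $T_{v_{q}}(TQ)=T_{v_{q}}\mathcal{M}\oplus\mathcal{F}^{V}$ from Theorem \ref{main:theorem} yields a unique such decomposition, forcing $c_{a}=\tau^{*}_{a}$ and hence the invariance of $\mathcal{M}$.

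The main obstacle I anticipate is the bookkeeping between the two musical isomorphisms: the equations of motion naturally involve $\sharp_{\mathcal{G}}$ on the base $Q$, whereas the target geometry lives on $TQ$ through the complete lift $\mathcal{G}^{c}$ and $\sharp_{\mathcal{G}^{c}}$, and the whole argument rests on Lemma \ref{completemetricLemma} intertwining these via the vertical lift. A secondary point requiring care is making the statement genuinely biconditional: the displayed equation \eqref{constrained:equation} pins the feedback coefficients down to $\tau^{*}$, so to recover those specific coefficients from the weaker $\mathcal{F}^{V}$-membership one must separately invoke the uniqueness in Theorem \ref{main:theorem} together with the fact that the curve remains on $\mathcal{M}$, rather than settling for arbitrary $c_{a}$.
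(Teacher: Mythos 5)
Your proposal follows essentially the same route as the paper's proof: fix the unique feedback $\tau^*$ from Theorem \ref{main:theorem}, read off the closed-loop equation $\nabla_{\dot{q}}\dot{q}+\text{grad}\,V=\pm\tau^*_a Y^a$ on $Q$, vertically lift it, and convert $[\sharp_{\mathcal{G}}(f^a)]^V$ into $\sharp_{\mathcal{G}^c}((f^a)^V)$ via Lemma \ref{completemetricLemma}. The only differences are matters of completeness rather than of method: you spell out the converse direction (pinning the coefficients $c_a$ down to $\tau^*_a$ through the transversal splitting and the requirement that the curve stay on $\mathcal{M}$), which the paper leaves implicit, and you explicitly flag the sign convention that the paper's own statement glosses over.
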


\begin{proof}
Let $q:I\to Q$ be the trajectory of the mechanical system (\ref{mechanical:control:system}), hence it is an intergral curve of the vector vield $\Gamma(v_q)$ with $v_q\in TQ$, of the form (\ref{SODE})
\begin{equation*}\Gamma(v_{q})=G(v_{q})+u_{a}(Y^{a})_{v_{q}}^{V}.\end{equation*}
From Theorem \ref{main:theorem} there exists a unique control function $\tau^*_a$ that makes $\mathcal{M}$ a virtual nonholonomic constraint i.e. \[\Gamma(v_{q}) = G(v_{q})+\tau_{a}^{*}(v_{q})(Y^{a})_{v_{q}}^{V}\in T_{v_{q}}\mathcal{M}\]

By the observations preceding equation \eqref{SODE} the trajectories of $\Gamma$ satisfy the equation 
$\nabla_{\dot{q}}\dot{q} + \text{grad} V + \tau^*_a Y^a  = 0$, 
since $Y^a=\sharp_\mathcal{G}(f^a)$ and lifting the equation yields

$$(\nabla_{\dot{q}}\dot{q} + \text{grad} V)^{V} = - \tau^{*}_a( \sharp_\mathcal{G}(f^a))^{V}.$$

By Lemma \ref{completemetricLemma} of the complete lift of the Riemannian metric $\mathcal{G}$, $\mathcal{G}^c$, we have $(\nabla_{\dot{q}}\dot{q} + \text{grad} V)^{V} = - \tau^{*}_a ( \sharp_{\mathcal{G}^{c}}(f^a)^{V})$. 
\end{proof}


The next proposition shows that if the vertical lift of the input distribution is orthogonal to the tangent space $T_{v_q}\mathcal{M}$ of the virtual nonholonomic constraint manifold $\mathcal{M}$ then the constrained dynamics is precisely the nonholonomic dynamics with respect to the original Lagrangian function.

\begin{proposition}\label{orthogonal:input:distribution}
If $\mathcal{F}^V$ is equal to $S^{\bot}$  then the trajectories of the feedback controlled mechanical system \eqref{constrained:equation} are the nonholonomic equations of motion \eqref{Chetaev's eqns}.
\end{proposition}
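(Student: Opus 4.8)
The plan is to compare two characterizations that are already available in the excerpt and show they coincide under the hypothesis $\mathcal{F}^V = S^\bot$. The previous theorem tells us that the closed-loop trajectories of the controlled system making $\mathcal{M}$ invariant satisfy
$$(\nabla_{\dot q}\dot q + \operatorname{grad} V)^V \in \mathcal{F}^V,$$
while the Riemannian form of Chetaev's equations (equation~\eqref{Chetaev's eqns}) characterizes the nonholonomic trajectories by
$$(\nabla_{\dot q}\dot q + \operatorname{grad} V)^V \in S^\bot,$$
together with the constraint $\phi(q,\dot q)=0$, i.e.\ the curve lies in $\mathcal{M}$. So the first step is simply to lay these two membership conditions side by side and observe that, once $\mathcal{F}^V = S^\bot$ by hypothesis, the two differential inclusions are literally the same condition.

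Next I would address the constraint-satisfaction half of the equivalence. A solution of the nonholonomic equations lies in $\mathcal{M}$ by definition of \eqref{noneq}, and a closed-loop trajectory produced by the control law of Theorem~\ref{main:theorem} also lies in $\mathcal{M}$ since $\hat u = \tau^*$ renders $\mathcal{M}$ invariant; thus both families of curves satisfy $\phi(q,\dot q)=0$. Combined with the matching differential inclusions, this gives that a curve satisfies the closed-loop equation \eqref{constrained:equation} together with $\phi=0$ if and only if it satisfies \eqref{Chetaev's eqns} together with $\phi=0$, which is exactly the claimed identification of the two dynamics.

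I would then note the one structural point that makes the hypothesis $\mathcal{F}^V = S^\bot$ meaningful rather than vacuous: by the Remark following the earlier theorem, $S^\bot = \sharp_{\mathcal{G}^c}(S^o)$ is spanned by \emph{vertical} vectors, and by Lemma~\ref{completemetricLemma} the generators $\sharp_{\mathcal{G}^c}(f^a)^V$ of $\mathcal{F}^V$ are likewise vertical, being vertical lifts. Hence both distributions live in the vertical subbundle and the equality $\mathcal{F}^V = S^\bot$ is a consistent condition on two $m$-dimensional vertical distributions; I would remark that it is equivalent, via the isomorphism $\sharp_{\mathcal{G}}$, to the one-forms $\{f^a\}$ spanning the same codistribution as $\{J^*(d\phi^a)\}$ pulled back appropriately, so the control forces are aligned with the Chetaev reaction forces.

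The main obstacle I anticipate is not analytic but one of bookkeeping: one must be careful that the uniqueness of the control in Theorem~\ref{main:theorem} and the well-posedness condition $\sharp_{\omega_L}(J^*(d\phi^a))\cap TM=\{0\}$ for the nonholonomic system \eqref{noneq} are compatible, so that both sides of the equivalence actually define genuine dynamics on $\mathcal{M}$ and neither is empty. I expect this to reduce to the transversality hypotheses already in force from Theorem~\ref{main:theorem}, so the argument should close without new estimates, the equivalence being essentially a matching of two distributions that the hypothesis forces to agree.
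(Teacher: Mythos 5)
Your proposal is correct and follows essentially the same route as the paper's own proof: the paper likewise observes that the Riemannian form \eqref{Chetaev's eqns} of Chetaev's equations places $(\nabla_{\dot{q}}\dot{q} + \operatorname{grad} V)^{V}$ in $S^{\bot}$, so that under the hypothesis $S^{\bot}=\mathcal{F}^{V}$ this membership is exactly the closed-loop condition \eqref{constrained:equation}. Your supplementary checks --- that both families of curves satisfy $\phi=0$, and that both distributions are vertical so the hypothesis is consistent --- are correct and merely make explicit what the paper leaves implicit.
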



\begin{proof}
From the Chetaev's equations (\ref{Chetaev's eqns}) we have that the vector field $(\nabla_{\dot{q}}\dot{q} + \text{grad} V)^V$ is a linear combination of $\sharp_{\mathcal{G}^c}(J^*(d\phi^a))$ the generators of $S^\perp$. If $S^\perp$ equals $\mathcal{F}^V$ then $(\nabla_{\dot{q}}\dot{q} + \text{grad} V)^V\in\mathcal{F}^V$ which yields equation (\ref{constrained:equation}). 
\end{proof}

\begin{remark}
    Notice that given a mechanical system with nonlinear constraints, there always exist a distribution $\mathcal{F}$ such that $\mathcal{F}^{V}=S^{\bot}$, since $S^{\bot}$ is spanned by vertical lifts of vector fields on $Q$.\hfill$\diamond$
\end{remark}

The next example illustrates Proposition \ref{orthogonal:input:distribution}.

\begin{example}
      Consider, as in example \ref{ex 1}, {a particle moving in three dimensional space and subject to the gravitational potential}, with the same Lagrangian $L:TQ\to\R$, 
    \[L(q,\dot{q})=\frac{m}{2}\left(\dotx^2+\doty^2+\dotz^2\right)-mgz\]
    but consider now a constraint that makes the magnitude of the velocity constant, namely, $\Phi(q,\dot{q})=0$ with \[\Phi=\dotx^2+\doty^2+\dotz^2-c=0, \quad c>0.\] The constraint manifold is given by \[\mathcal{M}=\{(q,\dot{q})\in TQ \;: \; \Phi(q,\dot{q})=0\}\]  and  consider the control force $F:TQ\times U\to T^*Q$
    \[F(q,\dot{q},u)=uf=u(\dot{x}dx+\dot{y}dy+\dot{z}dz).\]
    The controlled Euler-Lagrange equations are \[m\ddot{x}=u\dot{x}, \quad m\ddot{y}=u\dot{y}, \quad m\ddot{z}=-gm +u\dot{z}.\]
     The input distribution, $\mathcal{F}$, is generated by the vector field
    \[Y=\frac{\dot{x}}{m}\frac{\partial}{\partial x}+\frac{\dot{y}}{m}\frac{\partial}{\partial y}+\frac{\dot{z}}{m}\frac{\partial}{\partial z},\] thus the vertical lift of the input distribution, $\mathcal{F}^V$, is generated by \[Y^V=\frac{\dot{x}}{m}\frac{\partial}{\partial \dot{x}}+\frac{\dot{y}}{m}\frac{\partial}{\partial \dot{y}}+\frac{\dot{z}}{m}\frac{\partial}{\partial\dot{z}}.\]
 
    The control that makes the constraint manifold invariant is 
    \[\hat{u}=\frac{mg\dotz}{c}.\]
    For $S^\perp$, the orthogonal to $S$, we write the differencial of $\Phi$, namely, $d\Phi=2\dot{x}d\dot{x}+2\dot{y}d\dot{y}+2\dot{z}d\dot{z}$ and its image through the dual of the canonical almost tangent structure $J=dq\otimes\frac{\partial}{\partial\dot{q}}$,
    \[J^*(d\Phi)=2\dot{x}dx+2\dot{y}dy+2\dot{z}dz.\]
    Hence, $S^\perp$ is generated by 
    \[\sharp_{\mathcal{G}^c}(J^*(d\Phi))=\frac{\dot{x}}{m}d\dot{x}+\frac{\dot{y}}{m}d\dot{y}+\frac{\dot{z}}{m}d\dot{z}.\]
    Notice that the vertical lift of the input distribution, $\mathcal{F}^V$, is equal to $S^\perp$ and from Proposition \ref{orthogonal:input:distribution}, the local expression of the equations (\ref{Chetaev's eqns}) and (\ref{constrained:equation}) will be the same. Indeed, the equations of the corresponding nonholonomic systems are
    \[\begin{cases}
        m\ddot{x}=\lambda\dot{x} \\
        m\ddot{y}=\lambda\dot{y} \\
        m\ddot{z}+mg=\lambda\dot{z} \\
        \dotx^2+\doty^2+\dotz^2-c=0
    \end{cases}, \] where $\lambda\in\R$ is a Lagrange multiplier to be determined using the constraints, while the equations of the controlled system with control force determined by $F$ are given by 
    $$\begin{cases}
        m\ddot{x}=-\tau^*\dot{x} \\
        m\ddot{x}=-\tau^*\dot{y} \\
        m\ddot{x}+mg=-\tau^*\dot{z}
    \end{cases},$$
    where $\tau^{*}$ is the unique feedback control making the constraints invariant under the flow. The two systems are equivalent on the submanifold $\mathcal{M}$ i.e. the trajectories of the constrained mechanical system \eqref{constrained:equation} and the nonholonomic equations of motion \eqref{Chetaev's eqns} coincide on the constraint manifold.
\end{example}

\begin{remark}
    Note that, in the example above, we are enforcing as constraints a constant value of the kinetic energy of a thermostat system - see \cite{Rojo Bloch} for a complementary analysis of this problem.\hfill$\diamond$
\end{remark}

\begin{remark}
    When $\mathcal{M}$ is a linear distribution on $Q$, the assumption made in the previous proposition reduces to the assumption considered in \cite{virtual}, i.e., $\mathcal{F}$ is orthogonal to $\mathcal{M}$.

    Indeed, suppose that $\mathcal{M}$ is a linear distribution $\mathcal{D}$. There exist one-forms $\{\mu^{a}\}$, $a=1, \dots, m$ such that $(\mu^{a})^{V}= J^{*}(d\phi^{a})$. Hence, $(\mathcal{D}^{o})^{V}=S^{o}$. Therefore, $S=\{X\in T(TQ) | (\mu^{a})^{V}(X)=0 \}$ is a rank $2n-m$ distribution on $TQ$ spanned by vector fields of the form $X^{c}$, $X^{V}$ and $Y^{V}$ where $X\in\Gamma(\mathcal{D})$ and $Y\in \Gamma(\mathcal{D}^{\bot})$, and $S^{\bot}$ is a rank $m$ distribution on $TQ$ spanned by $Y^{V}$, so that $S^{\bot}$ is actually contained in $S$. Therefore $S^{\bot}=(\mathcal{D}^{\bot})^{V}$. Thus, the assumption $\mathcal{F}^{V}= S^{\bot}$ reduces to $\mathcal{F}=\mathcal{D}^{\bot}$ or, equivalently, the input distribution $\mathcal{F}$ must be orthogonal to the linear distribution $\mathcal{D}$. This is precisely the assumption made in \cite{virtual}.\hfill$\diamond$
\end{remark}
    
\section{Application and Simulation Results}\label{application section}

The previous results on virtual nonlinear nonholonomic constraints can be used to enforce a desired relation between state variables through a linear control force whenever the interplay between forces and constraints satisfies our assumptions. In the following, we give a particular application of how a desired constraint can be enforced in the problem of the motion of particles moving with an alignment on the velocities as in  \cite{Bloch Rojo}. This application can be useful in imposing virtual constraints for flocking motion in multi-agent systems \cite{flocking,flocking2}.

Consider two particles moving under the influence of gravity and which we desire to constrain to move with parallel velocity. Suppose that the motion of the two particles evolves in a plane parametrized by $(x,z)$. The position of the particles is given by $q_1=(x_1,0,z_1)$ and $q_2=(x_2,0,z_2)$, respectively, so the configuration space can be considered as $Q=\R^4$ with $q=(q_1,q_2)\in Q$.

The Lagrangian $L:TQ\to\R,$ is given by
\[L(q,\dot{q})=\frac{1}{2}m_1\dot{q}_1^2 + \frac{1}{2}m_2\dot{q}_2^2 - G(q)\]
where $G(q)=m_1gz_1 + m_2gz_2$ is the potential energy due to gravity and $m_i, i=1,2$ are the masses of the particles, respectively. The constraint is given by the equation $\Phi:TQ\to\R,$
\[\Phi(q,\dot{q})=\dot{x_1}\dot{z_2} - \dot{x_2}\dot{z_1}\]
and the control force is just $F:TQ\times\R\to T^*Q$ given by 
\[F(q,\dot{q},u)=u(f_1dx_1 + f_2dz_1 + f_3dx_2 + f_4dz_2).\]
The controlled Euler-Lagrange equations are 
\begin{equation}
    \begin{split}
         m_1 \ddot{x}_1 &= uf_1, \quad
         m_1\ddot{z}_1 +m_1g = uf_2, \\
         m_2\ddot{x}_2 &= uf_3, \quad
         m_2 \ddot{z}_2 + m_2g = uf_4.
    \end{split}
\end{equation}
The constraint manifold is $\mathcal{M}=\{(q,\dot{q})\in TQ \; :\; \Phi(q,\dot{q})=0\}$ and its tangent space, at every point $(q,\dot{q})\in\mathcal{M}$, is given by        $T_{(q,\dot{q})}\mathcal{M}=\{v\in TTQ\; :\; d\Phi(v)=0\}
          =\spn\{X_1, X_2, X_3, X_4, X_5, X_6, X_7\}$, with
\[X_1=\frac{\partial}{\partial x_1},  \quad X_2=\frac{\partial}{\partial z_1}, \quad X_3=\frac{\partial}{\partial x_2}, \quad X_4=\frac{\partial}{\partial z_2}\]
        \[X_5=\dot{x}_2\frac{\partial}{\partial \dot{x}_1} + \dot{z}_2\frac{\partial}{\partial\dot{z_1}} + \dot{x}_1\frac{\partial}{\partial\dot{x_2}} + \dot{z}_1\frac{\partial}{\partial\dot{z_2}}, \quad\]
        \[X_6=\dot{z}_1\frac{\partial}{\partial \dot{x}_1} + \dot{x}_1\frac{\partial}{\partial\dot{z_1}} + \dot{z}_2\frac{\partial}{\partial\dot{x_2}} + \dot{x}_2\frac{\partial}{\partial\dot{z_2}}, \quad\]
        \[X_7=\dot{x}_1\frac{\partial}{\partial \dot{x}_1} + \dot{z}_1\frac{\partial}{\partial\dot{z_1}} - \dot{x}_2\frac{\partial}{\partial\dot{x_2}} - \dot{z}_2\frac{\partial}{\partial\dot{z_2}}. \quad\]
The input distribution $\mathcal{F}$ is generated by the vector field $\displaystyle{Y=\frac{f_1}{m_1}\frac{\partial}{\partial x_1} + \frac{f_2}{m_1}\frac{\partial}{\partial z_1} + \frac{f_3}{m_2}\frac{\partial}{\partial z_2} + \frac{f_4}{m_2}\frac{\partial}{\partial z_2}}.$
Note here that the vertical lift of the input distribution, $\mathcal{F}^V$, which is generated by $\displaystyle{Y^V=\frac{f_1}{m_1}\frac{\partial}{\partial \dot{x}_1} + \frac{f_2}{m_1}\frac{\partial}{\partial \dot{z}_1} + \frac{f_3}{m_2}\frac{\partial}{\partial \dot{z}_2} + \frac{f_4}{m_2}\frac{\partial}{\partial \dot{z}_2}},$ is transversal to the tangent space of the constraint manifold, $T\mathcal{M}$. By Theorem \ref{main:theorem} there is a unique control law making the constraint manifold a virtual nonholonomic constraint. 
    The control law that makes the constraint manifold invariant is
    \begin{equation*}
            \hat{u}=\left(\dot{z}_2f_1-\dot{z}_1f_3 + \dot{x}_1f_4-\dot{x}_2f_2\right)^{-1}\left( g\dot{x}_1 - g\dot{x}_2\right).
    \end{equation*}
    For $f_1=f_2=1$ and $f_3=f_4=0$ we get $F(q,\dot{q},u)=u(dx_1 + dz_1)$ and
    \begin{equation*}
            \hat{u}=\left(\dot{z}_2 -\dot{x}_2 \right)^{-1}\left( g\dot{x}_1 - g\dot{x}_2\right).
    \end{equation*}

   We have simulated the closed-loop control system with the preferred feedback control law using a standard fourth-order Runge-Kutta method and initial points $(x_{1},x_{2}, z_{1}, z_{2}) = (1, 40, 0, 0)$ and initial velocities $(\dot{x}_{1}, \dot{x}_{2}, \dot{z}_{1}, \dot{z}_{2})=(80, 20, 40, 10)$. In Fig. \ref{traj} we show the controlled trajectories for both particles where can be seen the velocities' compliance with the constraint. The total energy of the system is depicted in Fig. \ref{energy} while the preservation of the constraint during the simulation time is shown in Fig. \ref{constraint}. Fluctuations of the values of the constraint function appear due to simulation computational process and are restricted to a minor interval as expected. The control function is depicted in Fig. \ref{controls} where it tends to zero since the motion tends to become vertical and gravity takes over.

    \begin{figure}[htb!]
        \centering
        \includegraphics[scale=0.45]{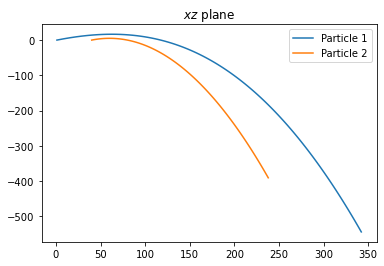}
        \caption{Controlled trajectory of the two particles}
        \label{traj}
    \end{figure}

    \begin{figure}[htb!]
        \centering
        \includegraphics[scale=0.45]{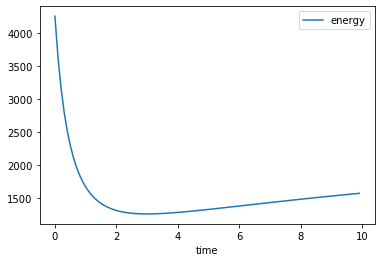}
        \caption{Total energy of the system}
        \label{energy}
    \end{figure}

    \begin{figure}[htb!]
        \centering
        \includegraphics[scale=0.45]{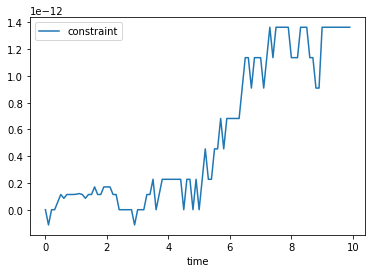}
        \caption{The value of the constraint function $\Phi$ during the simulation time}
        \label{constraint}
    \end{figure}

    \begin{figure}[htb!]
        \centering
        \includegraphics[scale=0.45]{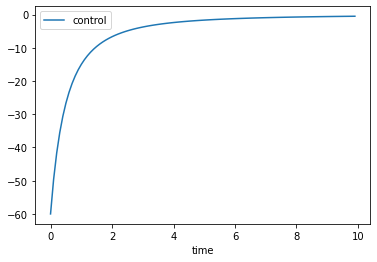}
        \caption{Control function during simulation time.}
        \label{controls}
    \end{figure}


\section{Conclusions and Future Work}

In this paper, we have extended our results in \cite{virtual} and \cite{affine} to the case of nonlinear constraints on the velocities. Our results guarantee that linear and affine control forces might be used to enforce desired constraints on the velocities and positions, provided they meet the assumptions on the statement of Theorem \ref{main:theorem}, that is, the tangent bundle to the constraint submanifold is transversal to the vertical lift of the input distribution. As future work, there is an obvious necessity of extending the range of applicability of our results to the cases in which the above assumptions are not met. In some of this cases, a control law might exist though it is possible that it is no longer unique.

One of our objectives for a future work is to adapt these results to applications to bipedal robot locomotion. To this end, we will extend our results to the same setting of the type of virtual nonlinear constraints appearing in \cite{griffin2015nonholonomic}. These constraints are of the form
$\phi(q_{a}, q_{u}, \dot{q}_{a}, \dot{q}_{u}) = q_{a} - h(q, \dot{q}_{u})$, where $q=(q_{a}, q_{u})$ are local coordinates of $Q$, with respect to which the actuated coordinate vector fields $\frac{\partial}{\partial q_{a}}$ are the control force vector fields $Y^{a}$, i.e, the controlled equations are of the type
$\nabla_{\dot{q}_{u}}\dot{q}_{u} =Y^0(q,\dot{q}), \text{ and } \nabla_{\dot{q}_a}\dot{q}_a=Y^0(q,\dot{q})+u_{a}\frac{\partial}{\partial q_{a}}$. Under this assumption, the vertical lift of $Y^{a}$ belongs to $T\mathcal{M}$, since $\langle d\phi, (Y^{a})^V \rangle = \langle d\phi, \frac{\partial}{\partial \dot{q}_{a}} \rangle=\frac{\partial \phi}{\partial \dot{q}_{a}}=0$. Therefore, $\mathcal{F}^{V}\subseteq T\mathcal{M}$. Hence, this set of virtual nonholonomic constraints does not fall  under the assumptions of Theorem \ref{main:theorem}. In addition, we will also address a related problem: instead of enforcing a constraint, we will study from the geometric point of view the stabilization properties of the virtual nonholonomic constraint.



\end{document}